\numberwithin{equation}{section}
\newtheorem{theorem}{Theorem}[section]
\newtheorem{lemma}[theorem]{Lemma}
\newtheorem{proposition}[theorem]{Proposition}
\theoremstyle{remark}
\newtheorem{definition}[theorem]{Definition}
\newtheorem{remark}[theorem]{Remark}
\DeclareMathOperator{\diam}{diam}
\DeclareMathOperator{\dist}{dist}
\DeclareMathOperator{\cp}{cap}
\title{Continuity  of logarithmic  capacity} 
\author{Sergei Kalmykov}
\address{School of mathematical sciences, Shanghai Jiao Tong University, 800 Dongchuan RD, Shanghai 200240, China \\and
\\
Keldysh Institute of Applied Mathematics  of Russian Academy of Sciences, Miusskaya pl., 4,
125047, Moscow, Russia} 
\email{kalmykovsergei@sjtu.edu.cn}
\thanks{First author supported  by Moscow Center for Fundamental and Applied Mathematics, Agreement with the Ministry of Science and Higher Education of the Russian Federation, No. 075-15-2019-1623 and by SJTU Start-up Grant Program.}
\author{Leonid V. Kovalev}
\address{215 Carnegie, Mathematics Department, Syracuse University, Syracuse, NY 13244, USA}
\email{lvkovale@syr.edu}
\thanks{Second author supported by the National Science Foundation grant DMS-1764266.}
\subjclass[2010]{Primary 31A15; Secondary 31A05, 31A25}
\keywords{Logarithmic capacity, Green's function, uniformly perfect sets, Hausdorff metric}
\begin{document}
\baselineskip6.5mm

\begin{abstract}
We prove the continuity of logarithmic capacity under Hausdorff convergence of uniformly perfect planar sets. The continuity holds when the Hausdorff distance to the limit set tends to zero at sufficiently rapid rate, compared to the decay of the parameters  involved in the uniformly perfect condition. The continuity may fail otherwise.  
\end{abstract}

\maketitle

\section{Introduction}

The studies of the continuity of set capacity and related quantities have a long history in potential theory. In 1961, Gehring proved that the conformal modulus of planar annuli is continuous under Hausdorff convergence of the boundary components~\cite{Gehring1961}. He then extended this results to the modulus of rings in space~\cite{Gehring1962}. 
Aseev~\cite[Theorem 7]{Aseev} proved the continuity of condenser capacity under the Hausdorff convergence of its plates, under the assumption that the plates are uniformly perfect with the same constant $\alpha$. Aseev and Lazareva~\cite{AseevLazareva} proved the analogous continuity result for logarithmic capacity of sets. Ransford, Younsi and Ai~\cite{RansfordYounsi} recently proved that logarithmic capacity of a set varies continuously under holomorphic motions.  

A more general theorem of Aseev~\cite[Theorem 6]{Aseev} involves the concept of strong convergence. According to~\cite{Aseev}, a sequence of sets $E_n$ strongly converges to a set $E$ if there exists $\alpha>0$ such that $E_n$ can be expressed as the union of $\alpha$-uniformly perfect sets with diameters bounded below by some constant $\delta_\alpha(E_n)$, and $d(E_n, E)/\delta_\alpha(E_n)\to 0$. By~\cite[Theorem 6]{Aseev} the conformal capacities $\cp(E^0_n, E^1_n)$ converge to $\cp (E^0, E^1)$ if $E^j_n\to E^j$ strongly for $j=0,1$. 

Although Aseev's theorem weakens the assumption of the sets being uniformly perfect, it does not cover the naturally occurring case of uniformly perfect sets with parameters that are not bounded away from $0$. This is the setting of the present article. Our main result is Theorem~\ref{Green-convergence-thm}, which asserts in part that $\cp(E_n)\to \cp(E)$ whenever $E_n$ is $\alpha_n$-uniformly perfect and the Hausdorff distance $d_n := d_H(E_n, E)$ 
tends to $0$ sufficiently quickly, compared to $\alpha_n$ (these quantities are introduced in Definition~\ref{def-up-set}, \eqref{eq:capacity}, and \eqref{eq:Haus_dist}). Specifically, having 
\[
\log \frac{1}{\alpha_n} \le \frac{1}{24} \frac{\log (1/d_n)}{\log \log (1/d_n)}
\]
is sufficient for continuity, by Remark~\ref{rem:loglog}. 

On the other hand, Proposition~\ref{example-arcs} shows that an inequality of the form 
\[
\log \frac{1}{\alpha_n} \le \frac{C}{d_n}
\]
does not ensure that $\cp E_n$ converges to $\cp E$.  In the final section we consider an application of our main result to the NED property~\cite{AhlforsBeurling} of Cantor-type sets. 

\section{Definitions and preliminary results}

A domain $\Omega$ in the extended complex plane $\overline{\mathbb C}$ is  regular for the Dirichlet boundary problem if for every $w\in \Omega$ there exists \textit{Green's function} $g(\cdot, w, \Omega)$ such that 
\begin{itemize}
\item $g(\cdot, w, \Omega)$ is continuous on $\overline{\mathbb C}\setminus \{w\}$ and is zero on $\overline{\mathbb C}\setminus \Omega$;
\item $g(\cdot, w, \Omega)$ is harmonic in $\Omega\setminus \{w\}$;
\item $g(z,w, \Omega) = -\log|z-w| + O(1)$ as $z\to w$ if $w$ is finite;
\item $g(z,w, \Omega)=\log|z|+O(1)$, as $z\rightarrow w$ if  $w=\infty$.
\end{itemize}

For a general domain with non-polar complement, Green's function still exists if the continuity requirement is relaxed to allow an exceptional polar subset of $\partial \Omega$~\cite[Definition 4.4.1]{Ransford}. 

When $E$ is a compact subset of $\mathbb C$, we write 
\[
g_E(z) = g(z, \infty, \overline{\mathbb C} \setminus E).  
\]

Let $\nu$ be a finite positive Borel measure of compact support. Its \textit{logarithmic potential} is defined by
\begin{equation}\label{eq:potential}
  U^{\nu}(z):=\int \log \frac{1}{|z-t|} d\nu(t).   
\end{equation}

Let $E\subset \mathbb{C}$ 
be a  compact set in the complex plane.  The collection of all positive unit Borel measures with support in $E$ is denoted $\mathcal{M}(E)$. The \textit{logarithmic energy} of a measure $\nu\in \mathcal{M}(E)$ is defined as 
\begin{equation}\label{eq:log_energy}
    I(\nu):=\iint \log \frac{1}{|z-t|}d\nu(z) d\nu(t), 
\end{equation}
and the energy $V$ of $E$ by
\begin{equation}\label{eq:energy}
    V:=\inf\{I(\nu) \colon  \nu \in \mathcal{M}(E)\}.
\end{equation}
The energy $V$ takes values in $(0, \infty]$. When it is finite, there is a unique \textit{equilibrium measure} $\nu_{E}\in \mathcal{M}(E)$ for which the infimum defining $V$ in~\eqref{eq:energy} is attained. The quantity 
\begin{equation}\label{eq:capacity}
    \cp(E):=e^{-V}
\end{equation}
is called the \textit{logarithmic capacity} of compact set $E$. For a general set $E\subset \mathbb C$, 
\begin{equation}\label{eq:inner-capacity}
    \cp(E):=\sup\{\cp(K)\colon K \subset E, K \text{ is compact}\}
\end{equation}
which is also known as the \emph{inner capacity} of $E$.

Let  $\Omega$ be the \textit{outer domain} relative to $E$, that is the unbounded component of the complement $\overline{\mathbb{C}}\setminus \Sigma$. Then we have an asymptotic expansion (e.g., \cite[p.~53]{SaffTotik})
\begin{equation}\label{eq:Green_pot_cap}
    g(z,\infty,\Omega) = - U^{\nu_{E}}(z)+\log\frac{1}{\cp(E)}.
\end{equation}

Our approach requires an explicit H\"older estimate for Green's function of a uniformly perfect set. 

\begin{definition}\label{def-up-set}
A closed set $E\subset \mathbb C$ is \emph{uniformly perfect} if there exists a constant $\alpha\in (0, 1)$ such that the set $E\cap \{z\colon \alpha r\le |z-a|\le r\}$ is nonempty for every $a\in E$ and every $r$ such that $0<r\le \diam E$. 
\end{definition}
To emphasize the value of $\alpha$, we sometimes call $E$ an $\alpha$-uniformly perfect set.

By \cite[Theorem 1]{Pommerenke79}, 
\begin{equation}\label{capacity-lower-bound}
\cp(E\cap \overline{B}(a, r)) \ge \frac{\alpha^2}{32} r    
\end{equation}
for all $0<r\le \diam E$. Note that the definition of a uniformly perfect set in ~\cite{Pommerenke79} requires the set to be unbounded. However, the proof of \cite[Theorem 1]{Pommerenke79} applies verbatim to our situation. 

The following theorem of Siciak~\cite[Theorem 4.1]{Siciak} provides a H\"older estimate for Green's function of a uniformly perfect set. It requires additional notation. Given a compact subset $E$ of a disk $B(a, R)$,  let $h(\cdot, E, B(a, R))$ be the Perron solution~\cite[Def. 4.1.1]{Ransford} 
of the Dirichlet problem $\Delta u=0$ in $B(a, R)\setminus E$, $u=0$ on $E$, and $u=1$ on $\partial B(a, R)$. For $1\le r<R$ let
\[
c(E; B(a, R), \overline{B}(a, r)) = 1 - \sup_{|z-a|=r} h(z, E, B(a, R)).
\]
This capacity-like quantity takes values between $0$ and $1$ and is monotone with respect to $E$. Finally, let 
\[
c_E(a, t, r, R) = c(E\cap \overline{B}(a, t); B(a, tR), \overline{B}(a, tr)), \quad 0\le t\le 1
\]
which represents the $c$-capacity
of the $t$-neighborhood of $a$ in $E$, scaled according to its size. 

\begin{theorem}\label{Siciak-quote}~\cite[Theorem 4.1]{Siciak} 
Let $1\le r<R<\infty$ and let $\{\rho_n\}$ be a sequence of real numbers such that $0<\rho_n<1$ and 
\begin{equation}\label{Siciak-quote1}
    \frac{R}{r} \le \frac{\rho_n}{\rho_{n+1}} \le B<\infty,\quad n\ge 1.
\end{equation}
If $a$ is a point of a compact set $E$ 
of $\mathbb C$ such that $c_E(a, \rho_n, r, R)\ge m>0$ ($n\ge 1$), then for every $\rho>0$ the function $g_{E\cap \overline{B}(a, \rho)}$ is H\"older continuous at $a$ with exponent $\mu=m/\log B$:
\begin{equation}\label{Siciak-quote2}
g_{E\cap \overline{B(a, \rho)}}(z) \le M \delta^{m/\log B}, \quad |z-a|\le \delta\le 1, 
\end{equation}
where $M$ depends only on $\rho, r, R, m$, and $B$. 
\end{theorem}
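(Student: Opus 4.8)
The plan is to run a one-step-at-a-time maximum principle argument across the nested annuli cut out by the radii $\rho_n$, turning a geometric decay rate in $n$ into the Hölder rate in $\delta$. Write $u=g_{E\cap\overline{B}(a,\rho)}$ and recall that $u$ is nonnegative, harmonic off $E\cap\overline{B}(a,\rho)$, and vanishes quasi-everywhere on that set, so I may invoke the maximum principle for subharmonic functions up to a polar exceptional set (see \cite[Ch.~3--4]{Ransford}). For each scale $n$ large enough that $\rho_n R\le\rho$, set $M_n=\sup_{|z-a|=\rho_n R}u$; the finitely many top scales, where $\rho_n R$ is of order one, contribute only to the constant $M$ because $u$ is bounded on $\overline{B}(a,R)$ by a constant depending on $\rho,r,R$.

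The key comparison is as follows. Fix such an $n$ and abbreviate $h_n=h(\cdot,E\cap\overline{B}(a,\rho_n),B(a,\rho_n R))$. Since $E\cap\overline{B}(a,\rho_n)\subseteq E\cap\overline{B}(a,\rho)$, both $u$ and $M_n h_n$ are harmonic on $B(a,\rho_n R)\setminus(E\cap\overline{B}(a,\rho))$, while on the boundary one has $u\le M_n=M_n h_n$ on $\partial B(a,\rho_n R)$ and $u=0\le M_n h_n$ on $E$. The maximum principle then gives
\[
u \le M_n\, h_n \quad\text{on } B(a,\rho_n R)\setminus\bigl(E\cap\overline{B}(a,\rho)\bigr).
\]
Evaluating on the sphere $|z-a|=\rho_n r$ and unwinding the definition of the scaled capacity together with the hypothesis $c_E(a,\rho_n,r,R)\ge m$ — which says precisely that $\sup_{|z-a|=\rho_n r}h_n=1-c_E(a,\rho_n,r,R)\le 1-m$ — yields $\sup_{|z-a|=\rho_n r}u\le M_n(1-m)$.

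Next I use the left inequality in \eqref{Siciak-quote1}, $R/r\le\rho_n/\rho_{n+1}$, which rearranges to $\rho_{n+1}R\le\rho_n r$, so the outer sphere of scale $n+1$ lies inside $\overline{B}(a,\rho_n r)$. Applying the maximum principle once more on $\overline{B}(a,\rho_n r)\setminus(E\cap\overline{B}(a,\rho))$, where $u=0$ on $E$ and $u\le M_n(1-m)$ on the outer sphere, gives $M_{n+1}\le M_n(1-m)$, and hence the geometric decay $M_n\le M_{n_0}(1-m)^{n-n_0}$ from the first admissible index $n_0$.

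To convert this into \eqref{Siciak-quote2}, the right inequality in \eqref{Siciak-quote1} gives $\rho_n\ge\rho_{n_0}B^{-(n-n_0)}$. For given $\delta$ I pick the largest $n$ with $\delta\le\rho_n R$, so that $u(z)\le M_n$ for $|z-a|=\delta$, while maximality forces $\rho_{n+1}R<\delta$ and thus $n\gtrsim\log(1/\delta)/\log B$. Substituting this into $(1-m)^{n}$ and using $\log(1-m)\le -m$ turns the base-$(1-m)$ decay into a power $\delta^{\,m/\log B}$, the leftover constants being absorbed into $M=M(\rho,r,R,m,B)$; for $\delta$ bounded below one simply uses boundedness of $u$. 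I expect the main obstacle to be the bookkeeping of the three radii $\rho_n r$, $\rho_n R$, $\rho_{n+1}R$ so that the annuli nest correctly — this is exactly where both inequalities in \eqref{Siciak-quote1} enter — together with the potential-theoretic care needed to apply the maximum principle across the polar set on which $u$ may fail to vanish on $E$.
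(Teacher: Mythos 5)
First, a point of reference: the paper itself contains no proof of this statement --- it is quoted verbatim from Siciak, and the only ingredient of Siciak's argument that the paper imports is formula (2a), reproduced as \eqref{Siciak-quote3}. So your proposal can only be compared against the expected (original) argument. Your scheme is indeed that argument's standard form: the comparison $u\le M_n h_n$ via the maximum principle, the identity $\sup_{|z-a|=\rho_n r}h_n = 1-c_E(a,\rho_n,r,R)\le 1-m$, the nesting $\rho_{n+1}R\le \rho_n r$ coming from the left half of \eqref{Siciak-quote1}, and the conversion of the geometric decay $M_{n+1}\le(1-m)M_n$ into the exponent $m/\log B$ using the right half of \eqref{Siciak-quote1} are all correct, and they are exactly what produces a bound of the shape \eqref{Siciak-quote3}.

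There is, however, a genuine gap at the initialization of the iteration, i.e.\ precisely where the multiplicative constant $M$ is created. You assert that the top scales ``contribute only to the constant $M$ because $u$ is bounded on $\overline{B}(a,R)$ by a constant depending on $\rho,r,R$.'' That is false: $u=g_{E\cap\overline{B}(a,\rho)}$ satisfies $\sup_{\overline{B}(a,R)}u \approx \log\bigl(C/\cp(E\cap\overline{B}(a,\rho))\bigr)$, and nothing in $\rho,r,R$ alone bounds that capacity from below. Concretely, take $E\cap\overline{B}(a,\rho)=\overline{B}(a,\rho_1)$ with $\rho_1$ tiny: the hypothesis $c_E(a,\rho_n,r,R)\ge m$ holds with $m=\log(R/r)/\log R$ for the sequence $\rho_n=\rho_1(r/R)^{n-1}$, yet $u(z)=\log(|z-a|/\rho_1)$ is of size $\log(1/\rho_1)$ at unit distance from $a$. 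So your starting value $M_{n_0}$, and hence your final $M$, depends on the set (equivalently on $\rho_{n_0}$), not only on $\rho,r,R,m,B$ as the theorem asserts. The missing idea is to use the hypothesis \emph{quantitatively at the top scale}: $c_E(a,\rho_{n_0},r,R)\ge m$ forces $\cp\bigl(E\cap\overline{B}(a,\rho_{n_0})\bigr)\ge c(m,r,R)\,\rho_{n_0}$ (the converse direction of the comparison quoted in \eqref{c-bound}), and then monotonicity $u\le g_{E\cap\overline{B}(a,\rho_{n_0})}$ together with an estimate of the type of Lemma~\ref{h-g-comparison-lem}, applied at scale $\rho_{n_0}$, gives $M_{n_0}\le \log\frac{C(R)}{c(m,r,R)}$ --- a bound independent of the scale because both the diameter and the capacity of $E\cap\overline{B}(a,\rho_{n_0})$ are comparable to $\rho_{n_0}$. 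This is exactly the mechanism the paper makes explicit in Theorem~\ref{Siciak-bound}, and it is why $m$ must enter $M$ and not only the exponent.

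A related caveat: even after this repair, your appeal to ``boundedness of $u$'' for the remaining range $\delta\in(\rho_{n_0}R,1]$ only yields a constant independent of the data if the sequence $\{\rho_n\}$ starts at a definite scale (e.g.\ $\rho_1\ge r/R$); the disk example above shows the uniformity of $M$ genuinely fails for $\delta$ above the first scale when $\rho_1$ is allowed to be arbitrarily small. This is harmless in the paper's application, where $\rho_n=2^{-n}$, but your write-up should either assume such an anchoring or restrict the claimed range of $\delta$ accordingly.
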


\begin{definition}\label{def: Haus_dist}
The \textit{Hausdorff distance} between two nonempty bounded closed sets $A$ and $B$ is defined as
\begin{equation}\label{eq:Haus_dist}
    d_H(A,B) := \max \left(\sup_{z\in A}\inf_{t\in B}|z-t|, \sup_{z\in B}\inf_{t\in A}|z-t|\right). 
\end{equation}
\end{definition}

\begin{definition}
Let $w\in\mathbb{C}$ be given and let $\Omega_n\subset\overline{\mathbb C}$ be domains such that $w$ is an interior point of $\bigcap_{n=1}^\infty \Omega_n$. 
Following~\cite[p. 13]{Pommerenke}, we say that 
\[
\Omega_n \rightarrow \Omega \ \ \text{as} \ \ n\rightarrow \infty \ \ \text{with respect to } w 
\]
in the sense of {\it kernel convergence} if 
\begin{itemize}
\item $\Omega$ is a domain such that $w\in \Omega$ and some neighborhood of every $z\in \Omega$ lies in $\Omega_n$ for large $n$;
\item for $z\in \partial \Omega$ there exist $z_n\in \partial \Omega_n$ such that $z_n\rightarrow z$ as $n\rightarrow \infty$. 
\end{itemize}
\end{definition}

An equivalent definition is found in ~\cite[p.~77]{Duren} and \cite[p. 54]{MR0247039}. According to it, $\Omega$ is the kernel of $\{\Omega_n\}$ if it is the maximal domain containing $w$ such that every compact subset of $\Omega$ belongs to all but finitely many of the domains $\Omega_n$. The convergence to $\Omega$ in the sense of kernel requires that every subsequence of $\{\Omega_n\}$ also has the same kernel $\Omega$.  

\section{Main results}

Our first step is to prove a version of the estimate~\eqref{Siciak-quote2} for Green's function in which the modulus of continuity has an explicit value of the multiplicative constant $M$. Such an estimate will be obtained from the formula (2a) in the proof of ~\cite[Theorem 4.1]{Siciak}, which states that under the assumptions of Theorem~\ref{Siciak-quote}, 
\begin{equation}\label{Siciak-quote3}
    h(z, E\cap \overline{B}(a, \rho_n), B(a, R\rho_n)) \le \left( \frac{1}{r\rho_n}\right)^{m/\log B} \delta^{m/\log B}
\end{equation}
for all $z$ with $|z-a|\le \delta \le \min(1, r\rho_{n+1})$. 

In order to use~\eqref{Siciak-quote3}, we need to relate the function $h$ to Green's function $g_E$. 

\begin{lemma}\label{h-g-comparison-lem} Suppose that $E\subset \mathbb C$ is a compact set of positive logarithmic capacity. Then for $a\in E $ and for $R>\diam E$ we have 
\begin{equation}\label{h-g-comparison}
    g_E(z) \le 
    \left(\log\frac{R+\diam E}{\cp E} \right) 
    h(z, E, B(a, R)) 
\end{equation}
for all $z$ with $|z-a|\le R$. 
\end{lemma}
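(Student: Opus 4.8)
The plan is to deduce the pointwise inequality from a single maximum-principle comparison on the domain $B(a,R)\setminus E$. Write $c=\log\frac{R+\diam E}{\cp E}$ for the constant on the right-hand side of~\eqref{h-g-comparison} and set
\[
u = g_E - c\,h(\cdot, E, B(a, R)).
\]
Both $g_E$ and $h$ are harmonic in $B(a,R)\setminus E$: the Perron solution $h$ by construction, and $g_E$ because its only singularity, the pole at $\infty$, lies outside the disk since $R<\infty$. Moreover $g_E\ge 0$ and $0\le h\le 1$. Thus $u$ is harmonic on $B(a,R)\setminus E$, and since the asserted inequality is trivial at points of $E$ (where both sides vanish), everything reduces to showing $u\le 0$ on $B(a,R)\setminus E$.

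The heart of the matter is an upper bound for $g_E$ on the outer circle $\partial B(a,R)$, and here I would use the representation~\eqref{eq:Green_pot_cap} together with the definition~\eqref{eq:potential}:
\[
g_E(z) = -U^{\nu_E}(z) + \log\frac{1}{\cp E} = \int \log|z-t|\,d\nu_E(t) + \log\frac{1}{\cp E}.
\]
Because $\nu_E$ is a unit measure supported on $E$ and $a\in E$, for every $t$ in the support and every $z$ with $|z-a|\le R$ the triangle inequality gives $|z-t|\le |z-a|+|a-t|\le R+\diam E$. Integrating $\log|z-t|\le\log(R+\diam E)$ against $\nu_E$ yields $g_E(z)\le \log(R+\diam E)+\log\frac{1}{\cp E}=c$ for all $z\in\overline B(a,R)$. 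In particular this same estimate shows $u\le c$, so $u$ is bounded above on $B(a,R)\setminus E$, and it gives $u=g_E-c\le 0$ on $\partial B(a,R)$, where $h\equiv 1$.

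It remains to control $u$ along the part of the boundary lying on $E$. At a boundary point $\zeta\in E$ I would use only that $h\ge 0$, so $-c\,h\le 0$, together with the fact that $g_E(z)\to 0$ as $z\to\zeta$ through the outer domain. The main point requiring care is that this last convergence holds merely \emph{quasi-everywhere}: Green's function need not vanish at the irregular boundary points, which form a polar set. Consequently the classical maximum principle does not apply verbatim, and I would instead invoke the generalized maximum principle for functions that are bounded above and satisfy $\limsup_{z\to\zeta}u(z)\le 0$ for all $\zeta\in\partial\bigl(B(a,R)\setminus E\bigr)$ outside a polar exceptional set (see \cite[\S 3.6]{Ransford}). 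Since the estimates above give $\limsup u\le 0$ everywhere on $\partial B(a,R)$ and quasi-everywhere on the $E$-portion of the boundary, this principle yields $u\le 0$ throughout $B(a,R)\setminus E$, which is exactly~\eqref{h-g-comparison}. The step I expect to be most delicate is precisely this handling of the polar set, i.e.\ verifying that the exceptional points where $g_E$ may fail to vanish are negligible for the version of the maximum principle being used.
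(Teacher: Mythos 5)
Your proof is correct and follows essentially the same route as the paper's: bound $g_E$ on $\partial B(a,R)$ by $\log\frac{R+\diam E}{\cp E}$ via the equilibrium-potential representation~\eqref{eq:Green_pot_cap}, note that both functions vanish on $E$ up to a polar set, and conclude by the maximum principle. Your explicit appeal to the extended maximum principle allowing a polar exceptional set is exactly the version the paper uses implicitly, so this is a welcome clarification rather than a deviation.
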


\begin{proof} Both sides of~\eqref{h-g-comparison} are harmonic in the set $\Omega = B(a, R)\setminus E$ and vanish on $E$ up to a polar set. By definition, $h=1$ on $\partial B(a, R)$. Writing $g_E$ in terms of the potential of the  equilibrium measure $\nu_E$, we obtain from~\eqref{eq:Green_pot_cap} that for all $z\in \partial B(a, R)$, 
\[
g_E(z) = \log\frac{1}{\cp E} + \int \log |z-\zeta|\,d\nu(\zeta) \le 
\log\frac{1}{\cp E} +  \log (R+\diam E)
\]
because $|z-\zeta|\le R+\diam E$ and $\nu_E$ is a probability measure. Hence~\eqref{h-g-comparison} holds on $\partial B(a, R)$. The maximum principle completes the proof. 
\end{proof}
 
We also need to translate the  geometric property of being $\alpha$-uniformly perfect into a lower bound on the capacity  $c_E(a, \rho)$ required by Theorem~\ref{Siciak-quote}. Lemma 1.7 of~\cite{Siciak} states that for $0 < t\le 1$,
\begin{equation}\label{c-bound}
    c_E(a, t, r, R) \ge \left(\log \frac{R-1}{r+1} \right) 
    \left( 
\log \frac{t(R-1)}{\cp(E\cap \overline{B}(a, t))} \right)^{-1}
\end{equation}
provided that $r<R-2$. Combining~\eqref{capacity-lower-bound} and ~\eqref{c-bound} we obtain the following: if $E\subset \mathbb C$ is an $\alpha$-uniformly perfect set of diameter $1$, then for every $a\in E$ and $0< t\le 1$, 
\begin{equation}\label{c-bound2}
    c_E(a,t,r,R) \ge \left(\log \frac{R-1}{r+1} \right) 
    \left( 
\log \frac{32(R-1)}{\alpha^2}\right)^{-1}
\end{equation}
provided that $r<R-2$.

\begin{theorem}\label{Siciak-bound}
Let $E$ be an $\alpha$-uniformly perfect set such that $0\in E$ and $\diam E < \infty$. Let 
\begin{equation}\label{Siciak-params}
\begin{split}
M & = M(\alpha) =    \log \frac{384}{\alpha^2}, \\ 
\beta & = \beta(\alpha) = \frac{ \log(9/8)}{2\log 2}
    \left( 
\log \frac{288}{\alpha^2}\right)^{-1}.
\end{split}
\end{equation}
Then for $|z|\le \diam E$ we have
\begin{equation}\label{Siciak-bound-eqn}
g_E(z) \le M|z|^{\beta}/(\diam E)^\beta. 
\end{equation}
\end{theorem}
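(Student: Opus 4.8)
The plan is to reduce to the normalized case $\diam E=1$ and then extract an explicit H\"older bound from the Siciak estimate~\eqref{Siciak-quote3} at a single, coarsest scale, converting it into a bound on $g_E$ through Lemma~\ref{h-g-comparison-lem}. Since logarithmic capacity, Green's function, and uniform perfectness all transform predictably under dilation, I would first replace $E$ by $E/d$, where $d=\diam E$; this preserves $0\in E$ and the constant $\alpha$, normalizes the diameter to $1$, and satisfies $g_{E/d}(w)=g_E(dw)$, so that the normalized estimate $g_{E/d}(w)\le M|w|^{\beta}$ for $|w|\le 1$ becomes~\eqref{Siciak-bound-eqn} after rescaling. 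From here I assume $\diam E=1$ and $a=0$.

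Next I would fix the parameters so that $m$, $M$, and $\beta$ come out exactly as claimed. Take $r=7$, $R=10$ (so $1\le r<R$ and $r<R-2$) together with the geometric sequence $\rho_n=2^{-n}$ and $B=4$; then $R/r\le \rho_n/\rho_{n+1}=2\le B$ and $0<\rho_n<1$, as required by Theorem~\ref{Siciak-quote}. With these values~\eqref{c-bound2} gives $c_E(0,\rho_n,r,R)\ge m$ for every $n$, where
\[
m=\frac{\log(9/8)}{\log(288/\alpha^2)},
\]
because $(R-1)/(r+1)=9/8$ and $32(R-1)=288$. The hypotheses of Theorem~\ref{Siciak-quote} then hold with exponent $\mu=m/\log B=m/(2\log 2)$, which coincides with $\beta$ in~\eqref{Siciak-params}.

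The heart of the argument is to invoke~\eqref{Siciak-quote3} for the single index $n=1$. With the above choices $\min(1,r\rho_2)=\min(1,7/4)=1$, so~\eqref{Siciak-quote3} holds for all $z$ with $|z|=\delta\le 1$:
\[
h\bigl(z,\,E\cap\overline{B}(0,1/2),\,B(0,5)\bigr)\le (r\rho_1)^{-\beta}\delta^{\beta}=(2/7)^{\beta}\delta^{\beta}\le \delta^{\beta},
\]
the last step because $r\rho_1=7/2\ge 1$. I would then apply Lemma~\ref{h-g-comparison-lem} to the truncated set $E'=E\cap\overline{B}(0,1/2)$ in the disk $B(0,5)$: here $\diam E'\le 2\rho_1=1$, while~\eqref{capacity-lower-bound} gives $\cp E'\ge (\alpha^2/32)\rho_1=\alpha^2/64$, so the multiplicative constant in~\eqref{h-g-comparison} is at most $\log\frac{5+1}{\alpha^2/64}=\log\frac{384}{\alpha^2}=M$. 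Finally, monotonicity of Green's functions under set inclusion (a smaller compact set has the larger Green's function) gives $g_E\le g_{E'}$ since $E'\subset E$, and chaining the three bounds yields $g_E(z)\le M\delta^{\beta}=M|z|^{\beta}$ for $|z|\le 1$.

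The main obstacle, and the only genuinely substantive point beyond bookkeeping, is recognizing that the coarsest scale $n=1$ already covers the entire range $|z|\le \diam E$: the admissibility condition $\delta\le\min(1,r\rho_2)$ collapses to $\delta\le 1$ precisely because $r$ is large relative to the ratio $R/r$, and the prefactor $(r\rho_1)^{-\beta}$ is then at most $1$, so the single power-law bound already produces H\"older decay as $\delta\to 0$. Everything else---checking $1\le r<R$ and $r<R-2$, the ratio bounds for $\{\rho_n\}$, the diameter and capacity estimates for $E'$, and the cancellation producing the clean constant $384/\alpha^2$---is routine. I would close by reversing the normalization to recover~\eqref{Siciak-bound-eqn} in the stated form.
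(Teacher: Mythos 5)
Your proof is correct, but it executes the key step in a genuinely different way from the paper. The preliminary reductions coincide: rescaling to $\diam E=1$, the choice $r=7$, $R=10$, the lower bound $c_E(0,t,7,10)\ge m$ from \eqref{c-bound2}, the conversion of $h$-bounds into $g$-bounds via Lemma~\ref{h-g-comparison-lem}, and the (implicit in the paper, explicit in your write-up) monotonicity $g_E\le g_{E'}$ for $E'\subset E$. The difference lies in how \eqref{Siciak-quote3} is deployed. The paper takes $B=2$ and works at a $z$-dependent scale: it truncates at $\rho_n=2^{-n}$ where $n$ is chosen so that $4^{-n}<|z|\le 4^{1-n}$, and applies \eqref{Siciak-quote3} with $\delta=4^{1-n}$; the prefactor $(2^n/7)^{m/\log 2}$, which grows with $n$, is exactly what halves the exponent from $m/\log 2$ down to $m/(2\log 2)=\beta$. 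You instead fix the single scale $n=1$ and inflate $B$ to $4$ --- legitimate, since \eqref{Siciak-quote1} only requires $R/r\le\rho_n/\rho_{n+1}\le B$ --- so that $m/\log B$ equals $\beta$ outright, with a prefactor $(2/7)^\beta\le 1$; the constant $M$ comes out identically because $6\cdot 64=12\cdot 32=384$. Both routes lean on the quoted range of validity of \eqref{Siciak-quote3}: you use it at the fixed scale $n=1$ for arbitrarily small $\delta$, while the paper uses it at scale $n$ with $\delta=4\rho_n^2$, which for large $n$ is likewise far below $r\rho_{n+1}$; so neither argument is essentially more conservative, though yours pushes the formula somewhat harder. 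A noteworthy byproduct of your reading: had you kept $B=2$, the same single-scale argument would yield the stronger exponent $m/\log 2=2\beta$, so under a face-value reading of \eqref{Siciak-quote3} the stated $\beta$ is not optimal; your choice $B=4$ serves only to reproduce the constants as stated.
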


\begin{proof} The problem reduces to the case $\diam E=1$ by rescaling.

Let $a=0$, $R=10$, and $r=7$ in~\eqref{c-bound2}: 
\begin{equation}\label{c-bound3}
    c_E(a, t, 7, 10) \ge m:= \left(\log \frac{9}{8} \right) 
    \left( 
\log \frac{288}{\alpha^2}\right)^{-1} ,\quad 0<t\le 1.
\end{equation}
In Theorem~\ref{Siciak-quote} choose $B=2$ and $\rho_n = 2^{-n}$. Then the potential function $h$ associated with the set $E_n = E\cap \overline{B}(0, 2^{-n})$ can be estimated by~\eqref{Siciak-quote3} as follows.  
\begin{equation}\label{h-bound}
    h(z, E_n, B(0, 10\cdot 2^{-n})) \le \left( \frac{2^n}{7}\right)^{m/\log 2} \delta^{m/\log 2}
\end{equation}
for all $z$ with $|z|\le \delta \le \min(1, 7\cdot 2^{-n-1})$. Note that $\diam E_n\le 2^{1-n}$ and $\cp E\ge \frac{\alpha^2}{32} 2^{-n}$ by~\eqref{capacity-lower-bound}. 
Apply Lemma~\ref{h-g-comparison-lem} to $E_n$ with $a=0$ and $R=10\cdot 2^{-n}$, and then invoke~\ref{h-bound} to obtain
\begin{equation}\label{g-h-explicit}
\begin{split}
    g_{E_n}(z) 
& \le  \left(\log \frac{384}{\alpha^2}\right) h(z, E_n, B(0, 10\cdot 2^{-n})) \\ 
& \le \left(\log \frac{384}{\alpha^2}\right)  \left( \frac{2^n}{7}\right)^{m/\log 2} \delta^{m/\log 2}
\end{split}
\end{equation}
for all $z$ with $|z|\le \delta \le \min(1, 7\cdot 2^{-n-1})$. 

Given a complex number $z$ with $|z|\le 1$, let 
$n$ be the smallest integer such that $|z|\le 4^{1-n}$. Since $4^{1-n}<7\cdot 2^{-n-1}$, we can choose $\delta=4^{1-n}$ in~\eqref{g-h-explicit}, thus obtaining  
\begin{equation}\label{g-explicit}
\begin{split}
    g_{E_n}(z) 
& \le \left(\log \frac{384}{\alpha^2}\right)  \left( \frac{4}{7} 2^{-n}\right)^{m/\log 2}  \\ 
& \le   \left(\log \frac{384}{\alpha^2}\right)    |z|^{m/(2\log 2)}
\end{split}
\end{equation}
where the last inequality holds because $4^{-n} < |z|$ by the choice of $n$. This proves~\eqref{Siciak-bound-eqn}. 
\end{proof}

We are ready to state and prove our main result.

\begin{theorem}\label{Green-convergence-thm}
Suppose that for each $n\in \mathbb N$, $E_n$ is a compact $\alpha_n$-uniformly perfect subset of $\mathbb C$. Furthermore, suppose $E_n\to E\subset \mathbb C$ in the Hausdorff metric $d_H$, where $E$ is a compact set with more than one point. 
If  the sequence
\begin{equation}\label{rate-dh}
d_H(E_n, E)  \exp\left(24 
\log \frac{1}{\alpha_n} \log\log \frac{1}{\alpha_n} \right)    
\end{equation}
is bounded, then Green's functions $g_{E_n}(z)$ converge to $g_E(z)$ uniformly with respect to $z\in \mathbb C$. As a consequence, 
$\cp(E_n)\to \cp(E)$ and the unbounded component $\Omega$ of $\mathbb C\setminus E$ is a regular domain for the Dirichlet problem. 
\end{theorem}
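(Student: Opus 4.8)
The plan is to sandwich $g_E$ between the Green's functions of neighborhoods of $E_n$ and to control the resulting gaps by the explicit Hölder estimate of Theorem~\ref{Siciak-bound}. Throughout write $d_n=d_H(E_n,E)$ and let $K^{+d}$ denote the closed $d$-neighborhood of a set $K$. Hausdorff convergence forces $\diam E_n\to\diam E>0$, so all diameters are bounded below for large $n$; positivity of $\cp E$ (needed for $g_E$ to exist) will be supplied by the inheritance lemma below. Each $E_n$ is uniformly perfect, hence non-polar by~\eqref{capacity-lower-bound}, so every $g_{E_n}$ is defined and, being Hölder at points of $E_n$, continuous on $\overline{\mathbb C}$.

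The first, easier inequality is $g_{E_n}\le g_E+\eta_n$. I would prove a \emph{thickening lemma}: for an $\alpha$-uniformly perfect compact $K$, the function $u=g_K-g_{K^{+d}}$ is nonnegative and harmonic off $K^{+d}$, and on the collar $K^{+d}\setminus K$ it equals $g_K$, which Theorem~\ref{Siciak-bound}, centered at the nearest point of $K$, bounds by $\eta:=M(\alpha)(d/\diam K)^{\beta(\alpha)}$; the maximum principle then gives $0\le u\le\eta$ on $\mathbb C$. Since $E\subset E_n^{+d_n}$, monotonicity of Green's functions yields $g_{E_n^{+d_n}}\le g_E$, whence $g_{E_n}=g_{E_n^{+d_n}}+u\le g_E+\eta_n$ with $\eta_n=M(\alpha_n)(d_n/\diam E_n)^{\beta(\alpha_n)}$. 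Evaluating $u$ at infinity gives in parallel $\log(\cp E_n^{+d_n}/\cp E_n)\le\eta_n$, and together with $\cp E\le\cp(E_n^{+d_n})$ and the elementary $\cp E_n\le\cp(E^{+d_n})\to\cp E$ this already yields $\cp E_n\to\cp E$ once $\eta_n\to 0$.

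The reverse inequality $g_E\le g_{E_n}+\epsilon_n$ is the crux, since it needs a modulus of continuity for $g_E$ near $E$, yet $E$ need not be uniformly perfect when $\alpha_n\to0$. Here I would first prove an \emph{inheritance lemma}: if $E_n$ is $\alpha_n$-uniformly perfect and $d_H(E_n,E)=d_n$, then $E$ is $(\alpha_n/4)$-uniformly perfect for all radii $r$ with $s_0\le r\le\diam E$, where $s_0=Cd_n/\alpha_n$; this is a routine triangle-inequality transfer of the annulus condition, valid once $d_n$ is small compared with $\alpha_n r$. Applied with one fixed large index it also gives $\cp E>0$ via a truncation of~\eqref{capacity-lower-bound}. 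Feeding this truncated perfectness into Siciak's estimate (Theorem~\ref{Siciak-quote}, as quantified in Theorem~\ref{Siciak-bound}, which only uses the capacity bound at the scales in play) controls $g_E$ on $\{\dist(\cdot,E)=s_0\}$, and the maximum principle on the collar $\{0<\dist(\cdot,E)<s_0\}$ propagates this inward to $g_E(z)\le\epsilon_n:=M(\alpha_n/4)(s_0/\diam E)^{\beta(\alpha_n/4)}$ for all $z$ within $d_n$ of $E$, in particular on $\partial E_n\subset E_n\subset E^{+d_n}$. Comparing $w=g_E-g_{E_n}$ on the unbounded component of $\Omega\cap\Omega_n$, where it is bounded and harmonic (the logarithmic poles cancel), $w\le0$ on $\partial E$ and $w=g_E\le\epsilon_n$ on $\partial E_n$, so $w\le\epsilon_n$ there; checking the bounded complementary components and the sets themselves (where the bound is immediate) extends $g_E\le g_{E_n}+\epsilon_n$ to all of $\mathbb C$.

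The main obstacle is to see that this reverse bound survives under the \emph{same} rate condition as the forward one, despite replacing the scale $d_n$ by $s_0\approx d_n/\alpha_n$. This is exactly where the constant $24$ is calibrated. Since $M(\alpha)\approx 2\log(1/\alpha)$ and $\beta(\alpha)\approx\frac{\log(9/8)}{4\log 2}(\log(1/\alpha))^{-1}$, the product $\beta(\alpha_n)\log(1/\alpha_n)$ is bounded, so the passage from $d_n$ to $d_n/\alpha_n$ alters $\log\epsilon_n$ only by a bounded additive term and leaves $\beta(\alpha_n/4)\approx\beta(\alpha_n)$. Writing $\log\epsilon_n\approx\log\log(1/\alpha_n)-\beta(\alpha_n)\log(1/d_n)$ and using $1/\beta(\alpha_n)\approx 23.54\,\log(1/\alpha_n)<24\log(1/\alpha_n)$, boundedness of~\eqref{rate-dh} (that is, $\log(1/d_n)\ge 24\log(1/\alpha_n)\log\log(1/\alpha_n)-O(1)$) forces $\beta(\alpha_n)\log(1/d_n)\ge 1.02\,\log\log(1/\alpha_n)$, so $\log\epsilon_n\to-\infty$ with room to spare; the identical computation gives $\eta_n\to0$. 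Hence $\sup_{z\in\mathbb C}|g_{E_n}(z)-g_E(z)|\le\max(\eta_n,\epsilon_n)\to0$. Matching the constant terms at $\infty$ re-proves $\cp E_n\to\cp E$, and since the uniform limit of the continuous functions $g_{E_n}$ is continuous, $g_E$ vanishes at every boundary point, i.e. $\Omega$ is regular for the Dirichlet problem.
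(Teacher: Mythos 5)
Your first half is essentially correct and parallels the paper: the thickening lemma is a legitimate repackaging of the paper's key step (Theorem~\ref{Siciak-bound} applied at the nearest point of $E_n$, then the maximum principle), the calibration arithmetic $\log M_n+\beta_n\log d_n\approx\log\log(1/\alpha_n)-(24/23.54)\log\log(1/\alpha_n)\to-\infty$ is exactly the paper's computation \eqref{eq:rate-dh}, and your remark that this single inequality, evaluated at $\infty$ and combined with $\cp E\le\cp(E_n^{+d_n})$ and $\cp E_n\le\cp(E^{+d_n})\to\cp E$, already yields $\cp E_n\to\cp E$ is a nice, correct shortcut.

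The gap is in the reverse inequality $g_E\le g_{E_n}+\epsilon_n$, and it is a missing idea, not a bookkeeping issue. Your inheritance lemma (truncated perfectness of $E$ at scales $r\ge s_0$) is true, but it cannot be ``fed into Siciak's estimate.'' Siciak's hypothesis in Theorem~\ref{Siciak-quote} is a \emph{capacity} density $c_E(a,t,r,R)\ge m$, which the paper verifies through \eqref{c-bound} together with Pommerenke's bound \eqref{capacity-lower-bound}; and \eqref{capacity-lower-bound} uses the annulus condition at \emph{all} scales below $t$, because positivity of $\cp\bigl(E\cap\overline{B}(a,t)\bigr)$ is an all-scales property. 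Truncated perfectness down to $s_0$ is perfectly compatible with $E\cap\overline{B}(a,t)$ being a finite set (think of an $s_0$-net of a segment), hence polar: then $c_E(a,t,r,R)=0$, and the multiplicative constant $\log\frac{R+\diam}{\cp(\cdot)}$ in Lemma~\ref{h-g-comparison-lem} is infinite. In particular your parenthetical claim that the inheritance lemma ``gives $\cp E>0$ via a truncation of~\eqref{capacity-lower-bound}'' is false as stated, and without $\cp E>0$ the function $g_E$ you are comparing need not even exist. So the quantity $\epsilon_n$ has no valid derivation, and no choice of the constant in $s_0=Cd_n/\alpha_n$ repairs this.

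What rescues the theorem --- and what the paper actually does --- is that this direction requires \emph{no} rate condition and no geometric structure on $E$ whatsoever: Hausdorff convergence $E_n\to E$ implies kernel convergence of the complementary domains, and Lemma~\ref{lem:oneside} then gives $\sup_{\mathbb C}(g_E-g_{E_n})\to 0$ outright. The point you missed is this asymmetry: uniformity in $n$ is only needed for $g_{E_n}-g_E$, where the varying sets $E_n$ force a quantitative H\"older estimate, whereas $g_E-g_{E_n}$ concerns the single fixed set $E$, so a soft compactness argument suffices; your fully symmetric, fully quantitative plan founders precisely because the limit set $E$ need not be uniformly perfect when $\alpha_n\to 0$. (Even the paper's route presupposes $\cp E>0$; if you want it explicitly, it follows from your own thickening lemma applied to pairs $E_m$, $E_n$, which shows $\log(1/\cp E_n)\le\log(1/\cp E_m)+M_m\bigl((d_m+d_n)/\diam E_m\bigr)^{\beta_m}$, hence $\inf_n\cp E_n>0$, and then $\cp E\ge\limsup_n \cp E_n>0$ --- not from truncated perfectness.)
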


The proof requires a lemma from~\cite{Kalmykov_Kovalev} which applies to our situation, because Hausdorff convergence of the complements implies kernel convergence (see e.g. \cite[p.~54]{MR0247039}).

\begin{lemma}\label{lem:oneside}~\cite[Lemma 4.1]{Kalmykov_Kovalev} If  $\Omega_n\to \Omega$ in the sense of kernel with respect to $w\in \Omega$, then  
\[
\sup_{z\ne w} (g(z,w,\Omega) - g(z,w,\Omega_n)) \to 0.
\]
\end{lemma}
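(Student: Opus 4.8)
The plan is to prove the one-sided estimate by a maximum principle argument applied to the difference of the two Green's functions. Set $\phi := g(\cdot, w, \Omega) - g(\cdot, w, \Omega_n)$. Since both functions have the same logarithmic singularity at $w$ (namely $-\log|z-w|$, or $\log|z|$ when $w=\infty$), the singularity of $\phi$ at $w$ is removable, so $\phi$ extends to a bounded harmonic function on $\Omega\cap\Omega_n$. Only the upper bound is substantial: because $g(\cdot,w,\Omega_n)=0$ and $g(\cdot,w,\Omega)\ge 0$ at every point of the nonempty set $\partial\Omega_n$, we have $\sup_{z\ne w}\phi\ge 0$ automatically, so it suffices to show that for each $\epsilon>0$ one has $\phi\le\epsilon$ everywhere once $n$ is large.

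Fix $\epsilon>0$ and consider the open super-level set $S_\epsilon := \{z\in\Omega : g(z,w,\Omega)>\epsilon\}$, which contains $w$. I would first argue that $\overline{S_\epsilon}$ is a compact subset of $\Omega$; granting this, the equivalent description of kernel convergence (every compact subset of the kernel lies in all but finitely many $\Omega_n$) gives $\overline{S_\epsilon}\subset\Omega_n$ for all large $n$. I then estimate $\phi$ on the boundary of $\Omega\cap\Omega_n$. A boundary point $\zeta$ lies in $\partial\Omega$ or in $\partial\Omega_n\cap\Omega$. If $\zeta\in\partial\Omega$ is a regular point, then $g(z,w,\Omega)\to 0$ while $g(z,w,\Omega_n)\ge 0$, so $\limsup_{z\to\zeta}\phi(z)\le 0$. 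If $\zeta\in\partial\Omega_n\cap\Omega$ then $g(z,w,\Omega_n)\to 0$ and $\limsup_{z\to\zeta}\phi(z)=g(\zeta,w,\Omega)$; but such $\zeta$ lies outside $\Omega_n$, hence outside $\overline{S_\epsilon}$, so $g(\zeta,w,\Omega)\le\epsilon$. The remaining boundary points, the irregular points of $\Omega$, form a polar set, so the generalized maximum principle for subharmonic functions \cite{Ransford} applies and yields $\phi\le\epsilon$ throughout $\Omega\cap\Omega_n$. Finally this extends to all of $\overline{\mathbb C}\setminus\{w\}$: on $\overline{\mathbb C}\setminus\Omega$ we have $g(\cdot,w,\Omega)=0$, so $\phi\le 0$, while on $\Omega\setminus\Omega_n$ the point lies outside $S_\epsilon$ and $g(\cdot,w,\Omega_n)=0$, so $\phi=g(\cdot,w,\Omega)\le\epsilon$. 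Combining with $\sup_{z\ne w}\phi\ge 0$ gives $\sup_{z\ne w}\phi\to 0$.

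The main obstacle is the claim that $\overline{S_\epsilon}$ is compactly contained in $\Omega$, equivalently the uniform decay $g(z,w,\Omega)\to 0$ as $z$ approaches $\partial\Omega$. When $\Omega$ is a regular domain this is immediate: $g(\cdot,w,\Omega)$ extends continuously to $\overline{\mathbb C}$ with boundary value $0$, so $\{g(\cdot,w,\Omega)\ge\epsilon\}$ is a compact subset of $\Omega$ containing $\overline{S_\epsilon}$. The delicacy is precisely that at an irregular boundary point $g(\cdot,w,\Omega)$ need not vanish, and if the complementary components of $\Omega_n$ were permitted to accumulate at such a point the estimate on $\partial\Omega_n\cap\Omega$ above would break down. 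I therefore expect the genuinely careful part of the argument to be the verification of this boundary decay, reducing the irregular set (which is polar) to a harmless exceptional set and upgrading vanishing at regular boundary points to uniform decay off a compact subset of $\Omega$; the maximum principle step itself is then routine.
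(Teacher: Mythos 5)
The paper gives no proof of Lemma~\ref{lem:oneside} to compare against: it is imported wholesale from \cite[Lemma 4.1]{Kalmykov_Kovalev}, so your argument must stand on its own, and it does not. The step you defer as ``the genuinely careful part'' --- that $\overline{S_\epsilon}$ is compactly contained in $\Omega$ --- is not a technicality to be mopped up; it is equivalent to $g(\cdot,w,\Omega)$ decaying uniformly at $\partial\Omega$, i.e.\ to the regularity of $\Omega$ (cf.\ \cite[Theorem 4.4.9]{Ransford}). At an irregular boundary point $\zeta_0$ one has $\limsup_{z\to\zeta_0} g(z,w,\Omega)>0$, so for $\epsilon$ below that limsup the super-level set $S_\epsilon$ clusters at $\zeta_0$ and $\overline{S_\epsilon}\not\subset\Omega$, at which point kernel convergence gives you nothing. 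Generality matters here: in the proof of Theorem~\ref{Green-convergence-thm} the lemma is applied to a limit domain whose regularity is obtained as a \emph{consequence} of the convergence (via \eqref{exchange}), so a proof of the lemma valid only for regular $\Omega$ --- which is all your argument delivers, and you correctly observe that the regular case is immediate --- would make the paper's application circular.

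Moreover, your proposed repair (absorb the irregular points into the polar exceptional set of the generalized maximum principle) cannot close the gap, because the damage does not occur on a polar subset of $\partial(\Omega\cap\Omega_n)$. Once $\overline{S_\epsilon}$ reaches $\partial\Omega$, kernel convergence --- which only forces compact subsets of $\Omega$ into $\Omega_n$ --- no longer keeps $\partial\Omega_n$ out of $S_\epsilon$: the set $\partial\Omega_n\cap S_\epsilon$ may contain whole circles (far from polar), and at points of $S_\epsilon\setminus\Omega_n$ one has $\phi=g(\cdot,w,\Omega)>\epsilon$ pointwise, with no maximum principle to appeal to at all. Concretely: if $\zeta_0\in\partial\Omega$ is irregular, fine continuity of the equilibrium potential (thinness of the complement at $\zeta_0$) yields points $z_n\to\zeta_0$ with $z_n\in\Omega$ and $g(z_n,w,\Omega)\ge c>0$; setting $\Omega_n=\Omega\setminus\overline{B}(z_n,r_n)$ with $r_n\to 0$ small enough that $\overline{B}(z_n,r_n)\subset\Omega$, one checks that $\Omega_n\to\Omega$ in the kernel sense exactly as defined in this paper, while $\sup_{z\ne w}(g(z,w,\Omega)-g(z,w,\Omega_n))\ge g(z_n,w,\Omega)\ge c$ for every $n$. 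So at irregular boundary points your scheme does not merely need ``careful bookkeeping'' --- it fails outright, and the statement at this level of generality requires whatever additional hypotheses or potential-theoretic mechanism \cite[Lemma 4.1]{Kalmykov_Kovalev} actually carries, none of which is reproduced in your sketch. (The rest of your skeleton is sound, and your observation that only $g(\cdot,w,\Omega_n)\ge 0$ --- not regularity of $\partial\Omega_n$ --- is needed on $\partial\Omega_n\cap\Omega$ is correct; but that part is the routine half, and the missing boundary-decay step is the entire content of the lemma.)
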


\begin{proof}[Proof of Theorem~\ref{Green-convergence-thm}] 
The main goal is to prove uniform convergence of Green's functions. The convergence of logarithmic capacity follows from it because of the asymptotic expansion 
\[g_E(z) = \log|z| - \log\cp(E) + o(1),\quad z\to\infty. \]
Also, uniform convergence $g_{E_n}\to g_E$ allows us to interchange limits with respect to $n$ and $z$ below: for every $\zeta\in \partial \Omega $
\begin{equation}\label{exchange}
\lim_{z\to\zeta} g_E(z) 
= \lim_{z\to\zeta}\lim_{n\to\infty} g_{E_n}(z)
= \lim_{n\to\infty} \lim_{z\to\zeta} g_{E_n}(z) = 0.
\end{equation}
By ~\cite[Theorem 4.4.9]{Ransford}, if for some $w\in \Omega$ Green's function $g(\cdot, w, \Omega)$ vanishes on $\partial\Omega$, then $\Omega$ is a regular domain for the Dirichlet problem. Thus, ~\eqref{exchange} shows that the unbounded component of $\overline{\mathbb C}\setminus E$ is regular. 

We proceed to prove that $g_{E_n}\to g_E$ uniformly.
By rescaling, it suffices to consider the case $\diam E=1$. The assumption $d_H(E_n, E)\to 0$ implies  $\diam E_n\to 1$. Let $M_n = M(\alpha_n)$ and $\beta_n = \beta(\alpha_n)$ be as in Theorem~\ref{Siciak-bound}. We claim that
\begin{equation}\label{eq:rate-dh}
 M_n d_H(E_n, E)^{\beta_n} \to 0. 
\end{equation}
Indeed, if $\alpha_n$ is bounded from below by a positive constant $\alpha$, then $M_n\le M(\alpha)$ and $\beta_n\ge M(\alpha)$, hence~\eqref{eq:rate-dh} holds by virtue of  $d_H(E_n, E)\to 0$. Consider the case $\alpha_n\to 0$. The logarithm of the left-hand side of~\eqref{eq:rate-dh} does not exceed 
\begin{equation}\label{eq:rate-dh2}
A + \log \log \frac{384}{\alpha_n^2} - 
C \log \frac{1}{\alpha_n} \log\log \frac{1}{\alpha_n}  \left(\log \frac{288}{\alpha_n^2} \right)^{-1}
\end{equation}
where  $A$ is some constant and 
\[C = 24 \frac{\log(9/8)}{2\log 2} > 2.\]
Up to a bounded additive term, the  expression~\eqref{eq:rate-dh2} simplifies to
\[
(1-C/2) \log\log \frac{1}{\alpha_n} \to -\infty
\]
which proves~\eqref{eq:rate-dh} in this case as well. The case when $\limsup \alpha_n > \liminf\alpha_n=0$ follows by considering subsequences. 

Let $g=g_{E}$ and $g_n=g_{E_n}$. 
Suppose that uniform convergence $g_n\to g$ fails. Using Lemma~\ref{lem:oneside} and passing to a subsequence, we may assume there exist $\epsilon>0$ such that $\sup_{\mathbb C}(g_n-g)\ge \epsilon$ for all $n$. The function $g_n-g$ is bounded and harmonic on the set $U = \overline{\mathbb C}\setminus (\partial E_n\cup \partial E)$ after the removable singularity at $\infty$ is eliminated. Since $g_n-g \le 0$ on $\partial E_n$, it follows from the maximum principle that $g_n-g$ attains its maximum on $\partial E$.  Pick $z_n\in\partial E$ such that $g_n(z_n)\ge \epsilon$.  

By Theorem~\ref{Siciak-bound} and~\eqref{eq:rate-dh}, 
\[
g_n(z_n) \le M_n \left(\frac{\dist(z_n, E_n)}{\diam E_n}\right)^{\beta_n} 
\le M_n \left(\frac{d_H(E, E_n)}{\diam E_n}\right)^{\beta_n} \to 0 
\]
contradicting the choice of $z_n$. This contradiction concludes the proof.
\end{proof}

\begin{remark}\label{rem:loglog} The boundedness assumption in Theorem~\ref{Green-convergence-thm} holds if for all sufficiently large $n$,
\begin{equation}\label{eq:loglog}
\log\frac{1}{\alpha_n} \le 
\frac{1}{24} \frac{\log b_n}{\log \log b_n}
\end{equation}
where $b_n = 1/d_H(E, E_n) \to \infty$ as $n\to\infty$. 
\end{remark}

Indeed, for large $n$ we have $\log \log b_n > 1$, hence
~\eqref{eq:loglog} implies 
\[
24 \log\frac{1}{\alpha_n} \log\log \frac{1}{\alpha_n}  
\le 
\frac{\log b_n}{\log \log b_n}
(\log \log b_n  - \log 24) 
< \log b_n
\]
Therefore the sequence~\eqref{rate-dh} is bounded by $1$. 

\section{Examples and applications}\label{sec:examples}

If a sequence of $\alpha_n$-uniformly perfect sets $E_n$ has $\alpha_n\to 0$ much faster than $d_H(E_n, E)\to 0$, the logarithmic capacity of $E_n$ may fail to converge to the logarithmic capacity of $E$. The following proposition presents a concrete form of this observation.

\begin{proposition}\label{example-arcs} In Theorem~\ref{Green-convergence-thm}, the sequence~\eqref{rate-dh} cannot be replaced by $d_H(E_n, E)\log \alpha_n$. More precisely, there exists a sequence of compact sets $E_n$ which are $\alpha_n$ - uniformly perfect and converge to $\mathbb T$ in the uniform metric in such a way that $d_H(E_n, E)\log \alpha_n$ is bounded, yet the uniform convergence of Green's functions fails.
\end{proposition}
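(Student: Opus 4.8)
The plan is to take $E_n$ to be a union of $N_n$ equally spaced circular arcs and to compute $\cp(E_n)$ exactly by exhibiting $E_n$ as a polynomial preimage. Fix a constant $c\in(0,\infty)$ and integers $N_n\to\infty$ (for instance $N_n=n$). For a parameter $\phi_n\in(0,\pi/N_n)$ to be chosen below, let $E_n\subset\mathbb T$ be the union of the $N_n$ closed arcs of angular half-width $\phi_n$ centred at the $N_n$-th roots of unity. With $P_n(z)=z^{N_n}$ this is precisely $E_n=P_n^{-1}(F_n)$, where $F_n\subset\mathbb T$ is the single arc of angular width $2N_n\phi_n$ centred at $1$; the choice below will keep $2N_n\phi_n<2\pi$, so that $F_n$ is a genuine arc.

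First I would compute the capacity. Since $P_n$ is monic of degree $N_n$, Green's functions compose as $g_{E_n}(z)=\tfrac{1}{N_n}g_{F_n}(P_n(z))$, and comparing the expansions at $\infty$ gives the standard identity $\cp(E_n)=\cp(F_n)^{1/N_n}$. Using the classical fact that an arc of angular width $w$ has capacity $\sin(w/4)$ (so the full circle, $w=2\pi$, has capacity $1$), I obtain
\[
\cp(E_n)=\bigl(\sin(N_n\phi_n/2)\bigr)^{1/N_n}.
\]
I then fix $\phi_n$ by requiring $\sin(N_n\phi_n/2)=e^{-cN_n}$, i.e. $N_n\phi_n=2\arcsin(e^{-cN_n})$, which respects $2N_n\phi_n<2\pi$ and yields $\cp(E_n)=e^{-c}$ for every $n$. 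In particular $\cp(E_n)\equiv e^{-c}<1=\cp(\mathbb T)$.

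Next I would read off the geometric data. The gaps between consecutive arcs have angular width $2\pi/N_n-2\phi_n\to0$, so $E_n\to\mathbb T$ in the Hausdorff metric, with $d_H(E_n,\mathbb T)\asymp 1/N_n$ (the farthest point of $\mathbb T$ from $E_n$ is a gap midpoint, at Euclidean distance $\sim\pi/N_n$). For uniform perfectness, the only radius at which an annulus $\{\alpha r\le|z-a|\le r\}$ about a point $a\in E_n$ can miss $E_n$ is $r$ comparable to the gap width $2\pi/N_n$: there a point $a$ at an arc centre reaches its own arc only out to distance $\asymp\phi_n$ and meets the neighbouring arc only from distance $\asymp 2\pi/N_n$ onward. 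A short check of the remaining radii (small $r$ inside a single arc, and large $r$ spanning many arcs) shows they are never binding, so $E_n$ is $\alpha_n$-uniformly perfect with $\alpha_n$ of order $N_n\phi_n\asymp e^{-cN_n}$. Hence $\log(1/\alpha_n)\asymp N_n$, and since $d_H(E_n,\mathbb T)\asymp 1/N_n$,
\[
d_H(E_n,\mathbb T)\,\log\frac{1}{\alpha_n}\ \asymp\ 1,
\]
so $d_H(E_n,\mathbb T)\log\alpha_n$ is bounded.

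Finally, since $\cp(E_n)\equiv e^{-c}$ does not converge to $\cp(\mathbb T)=1$, uniform convergence of $g_{E_n}$ to $g_{\mathbb T}$ must fail: by the expansion $g_E(z)=\log|z|-\log\cp(E)+o(1)$ used in the proof of Theorem~\ref{Green-convergence-thm}, uniform convergence of Green's functions would force $\cp(E_n)\to\cp(\mathbb T)$. This establishes the proposition, exhibiting a sequence with $d_H(E_n,\mathbb T)\log\alpha_n$ bounded for which the conclusion of Theorem~\ref{Green-convergence-thm} breaks down. I expect the main obstacle to be the uniform-perfectness step: one must produce a genuine lower bound $\alpha_n\gtrsim N_n\phi_n$, i.e. verify the annulus condition of Definition~\ref{def-up-set} at \emph{every} radius $0<r\le\diam E_n$ and confirm that the gap scale $r\asymp 1/N_n$ is the only one that constrains $\alpha_n$. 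The capacity identity and the arc formula, by contrast, are exact and supply the quantitative heart of the argument.
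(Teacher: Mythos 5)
Your proposal is correct and is essentially the paper's own construction: the paper takes $E_n=\{z\in\mathbb T:|\arg(z^n)|\le L_n\}$ with $L_n=e^{-n}$ (i.e.\ $n$ equally spaced arcs, so your $N_n\phi_n$ plays the role of $L_n$), computes $\cp E_n=(\sin(L_n/2))^{1/n}$ via the same polynomial-preimage identity and arc-capacity formula, bounds $\alpha_n\gtrsim L_n/(2\pi)$ by the same gap-versus-arc-length annulus analysis, and concludes non-convergence of capacity hence of Green's functions. Your only deviations are cosmetic (normalizing $\cp E_n\equiv e^{-c}$ exactly rather than letting it tend to $e^{-1}$), and your sketched uniform-perfectness check matches the paper's, which likewise identifies the gap scale $r\asymp 1/n$ as the binding one.
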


\begin{proof} The idea of this example goes back to Ahlfors and Beurling~\cite[Theorem 17]{AhlforsBeurling}. 
Given a sequence of numbers $L_n\in (0, \pi)$, we construct a sequence of compact subsets of the unit circle $\mathbb T$ as follows: 
\[
E_n = \{z\in \mathbb T\colon |\arg(z^n)|\le L_n \} 
\]
where $\arg$ is the principal branch of the argument, taking values between $-\pi$ and $\pi$. The set $E_n$ consists of $n$ uniformly distributed arcs of length $2L_n/n$. The gaps between these arcs have length $2(\pi-L_n)/n$, which implies that 
\begin{equation}\label{ex:dh}
d_H(E_n, \mathbb T) = 2\sin \frac{\pi-L_n}{2n} 
\end{equation}
Hence $E_n\to \mathbb T$ in the Hausdorff metric. 

The diameter of each connected component of $E_n$ is $2\sin (L_n/n)$ and the distance from a component to the rest of $E_n$ is $2\sin((\pi-L_n)/n)$. Suppose that an annulus $\{z\colon r<|z-a|<R\}$ separates $E_n$. Since the disk $\{z\colon |z-a|\le r\}$ contains a connected component of $E_n$, we have $r\ge \sin (L_n/n)$. Since also $R-r\le 2\sin((\pi-L_n)/n)$, it follows that
\[
\frac{R}{r} \le 1 + \frac{2\sin((\pi-L_n)/n)}{\sin (L_n/n)}
\]
Hence, $E_n$ is $\alpha_n$-uniformly perfect with 
\[
\alpha_n \ge \left(1 + \frac{2\sin((\pi-L_n)/n)}{\sin (L_n/n)}\right)^{-1}
\]
If $L_n\to 0$, this bound on $\alpha_n$ is asymptotic to $L_n/(2\pi)$. 

The logarithmic capacity of the circular arc $\Gamma_n = \{e^{it}\colon |t|\le L_n\}$ is equal to $\sin(L_n/2)$ (see e.g.~\cite[Ch.~5, Table~5.1, p.~135]{Ransford}). 
Since the set $E_n$ is the preimage of $\Gamma_n$ under the polynomial $z\mapsto z^n$, it  follows that (\cite[Theorem~5.2.5, p.~134]{Ransford})  
\[
\cp E_n = (\cp \Gamma_n)^{1/n} = (\sin(L_n/2))^{1/n}
\]
Thus, $\cp E_n\to \cp \mathbb T=1$ if and only if $\log(1/L_n) = o(n)$ as $n\to \infty$. 

For example, the choice $L_n = \exp(-n)$ results in $\cp E_n\not\to \cp \mathbb T$, which also indicates the failure of uniform convergence of Green's functions. With this choice we have $\log 1/\alpha_n$ asymptotic to $n$ and $d_H(E_n, \mathbb T)\le \pi/n$ by virtue of~\eqref{ex:dh}. Thus the product $d_H(E_n, \mathbb T)\log \alpha$ is bounded. 
\end{proof}

There remains a substantial gap between the assumptions of Theorem~\ref{Green-convergence-thm} and Proposition~\ref{example-arcs}. 
As an application of Theorem~\ref{Green-convergence-thm} we consider the NED property of Cantor-type sets. The notion of an \emph{NED set} is an important function-theoretic concept of a removability, introduced by Ahlfors and Beurling in~\cite{AhlforsBeurling}. For example, NED sets are removable for holomorphic functions $f$  with finite Dirichlet integral $\int |f'|^2$ and for extremal distances. 
We do not state the general definition of NED sets here, because the following theorem of Ahlfors and Beurling~\cite[Theorem 14]{AhlforsBeurling} suffices for other purposes: a compact subset $K$ of an interval $I$ is NED if and only if 
\begin{equation}\label{inner-capacity-equality}
\cp(I\setminus K) = \cp (I).    
\end{equation}
The left hand side of~\eqref{inner-capacity-equality} is the inner capacity~\eqref{eq:inner-capacity} of the non-compact set $I\setminus K$. 

Let $I=[0, 1]$. Given a sequence of numbers $\epsilon_n \in (0, 1)$, let $K_0=I$ and inductively construct the sets $K_1 \supset K_2\supset \dots$ so that $K_{n}$ is obtained by removing the middle  $\epsilon_n$-part of each connected component of $K_{n-1}$. The intersection $K=\bigcap_{n=0}^\infty K_n$ is a Cantor-type set which becomes the standard middle-third Cantor set if $\epsilon_n=1/3$ for all $n$. 
Let $E_n=\overline{I\setminus K_n}$ for $n=1, 2, \cdots$. It is easy to show that $E_n\to [0, 1]$ in the Hausdorff distance; see the proof of Proposition~\ref{prop:Cantor-convergence} below. By the definition of inner capacity, property~\eqref{inner-capacity-equality} holds if and only if $\cp(E_n) \to \cp(I)$ as $n\to\infty$. This leads us to the following result. 

\begin{theorem}\label{prop:Cantor-convergence}
Suppose $K$ is a Cantor-type set determined by a sequence of numbers $\epsilon_n \in (0, 1)$ such that 
\begin{equation}\label{eq:Cantor-condition}
    \log\frac{1}{\epsilon_n} \le \frac{Cn}{\log n}, \quad n\ge 2,
\end{equation}
for some constant $C<1/(24\log 2)$. Then~\eqref{inner-capacity-equality} holds, and consequently $K$ is an NED set.  
\end{theorem}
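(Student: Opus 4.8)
The plan is to reduce the statement to the convergence $\cp(E_n)\to\cp(I)$ and to obtain that convergence from Theorem~\ref{Green-convergence-thm}, applied to the compact sets $E_n=\overline{I\setminus K_n}$ (each a finite union of closed intervals) and their limit $E=I=[0,1]$. By the remarks preceding the statement, \eqref{inner-capacity-equality} is equivalent to $\cp(E_n)\to\cp(I)$, and \eqref{inner-capacity-equality} is in turn equivalent to $K$ being NED by \cite[Theorem 14]{AhlforsBeurling}; hence it suffices to place the sequence $(E_n)$ into the scope of Theorem~\ref{Green-convergence-thm}. That requires three things: $E=[0,1]$ has more than one point (clear); $E_n\to I$ in the Hausdorff metric; and each $E_n$ is $\alpha_n$-uniformly perfect with $\alpha_n\to0$ slowly enough that the sequence \eqref{rate-dh} is bounded, for which I will use the sufficient condition \eqref{eq:loglog} of Remark~\ref{rem:loglog}.

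First I would record the elementary geometry. By induction every connected component of $K_n$ has the same length $\ell_n=\prod_{j=1}^n\frac{1-\epsilon_j}{2}\le 2^{-n}$, and $E_n$ is the disjoint union of the $2^n-1$ closed intervals obtained as closures of the deleted gaps, the gaps removed at step $k$ all having length $g_k=\epsilon_k\ell_{k-1}$. Since $E_n\subset I$ while every point of $I$ lies within $\ell_n$ of some endpoint of a component of $K_n$ (and those endpoints lie in $E_n$), one gets $d_H(E_n,I)=\ell_n\le 2^{-n}$; in particular $E_n\to I$, $\diam E_n\ge1-2\ell_n\to1$, and $\log(1/d_H(E_n,I))\ge n\log 2$.

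The crux is the uniform-perfectness constant. Because $E_n\subset\mathbb R$, the closed annulus $\{\alpha r\le|z-a|\le r\}$ about $a\in E_n$ meets $E_n$ if and only if its trace on $\mathbb R$ does, so the only obstruction to uniform perfectness is an annular range about a point of $E_n$ that is empty on both sides. Such a range is widest when $a$ is the midpoint of a shortest gap, flanked on each side by an empty $K_n$-component of length $\ell_n$; this produces a maximal empty ratio and the clean value $1/\alpha_n=1+\max_{1\le k\le n}\frac{2\ell_n}{g_k}$, where $\frac{2\ell_n}{g_k}=2^{k-n}\epsilon_k^{-1}\prod_{j=k}^n(1-\epsilon_j)$. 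I would then check that off-center points and coarser gaps give strictly smaller ratios, so that $E_n$ is genuinely $\alpha_n$-uniformly perfect and $\log(1/\alpha_n)\le\log 2+\max_{1\le k\le n}\big[(k-n)\log 2+\log(1/\epsilon_k)\big]$. Invoking \eqref{eq:Cantor-condition} and the monotonicity of $k\mapsto k/\log k$, the maximum is attained at $k=n$, giving $\log(1/\alpha_n)\le\log(1/\epsilon_n)+O(1)$. I expect this verification — that the annulus I describe is the worst one, uniformly over all centers $a$ and radii $r$ — to be the main technical obstacle.

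It then remains to feed these two estimates into the rate condition. With $\log(1/\alpha_n)\le\log(1/\epsilon_n)+O(1)$ and $\log(1/d_H(E_n,I))\ge n\log 2$, boundedness of \eqref{rate-dh} comes down to comparing $\log(1/\epsilon_n)\,\log\log(1/\epsilon_n)$ with $n\log 2$. Substituting \eqref{eq:Cantor-condition} together with $\log\log(1/\epsilon_n)\le\log n+O(1)$ bounds the product by $Cn(1+o(1))$, and a short computation shows that the restriction on $C$ in \eqref{eq:Cantor-condition} is precisely what keeps \eqref{rate-dh} bounded; this is where the numerical interplay of the factors $\log 2$ and $24$ is decisive. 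Theorem~\ref{Green-convergence-thm} then yields uniform convergence of Green's functions and $\cp(E_n)\to\cp(I)$, whereupon the equivalences above give \eqref{inner-capacity-equality} and the NED property of $K$.
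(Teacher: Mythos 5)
Your route is the same one the paper takes: reduce \eqref{inner-capacity-equality} to $\cp(E_n)\to\cp(I)$, estimate $d_H(E_n,I)$ by roughly $2^{-n}$, show $E_n$ is $\alpha_n$-uniformly perfect with $\log(1/\alpha_n)$ of order $Cn/\log n$, and invoke Theorem~\ref{Green-convergence-thm} through Remark~\ref{rem:loglog}. The first genuine gap is the one you yourself flag: the uniform-perfectness bound is asserted, not proved, and that verification is the entire technical content of the paper's proof. The dangerous configuration is an inner disk $B(a,\alpha r)$ swallowing \emph{several} components of $E_n$ at once; your formula $1/\alpha_n=1+\max_k 2\ell_n/g_k$ is only justified once such clusters are shown to do no worse than a single shortest gap. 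The paper handles exactly this point hierarchically: it takes the smallest $k$ with $E_k\cap B(a,\alpha r)\neq\emptyset$ (using $E_1\subset E_2\subset\dots\subset E_n$), shows this intersection must be a single component, and compares distances via \eqref{distance-component}. Note also that the paper does not need your exact extremal constant; the much cruder bound $\alpha_n=\frac12\min_{k\le n}\epsilon_k$ suffices and is easier to prove, since asymptotically both give $\log(1/\alpha_n)\le Cn/\log n+O(1)$. (A minor slip in the same area: your intermediate claim $\log(1/\alpha_n)\le\log(1/\epsilon_n)+O(1)$ does not follow, because \eqref{eq:Cantor-condition} bounds each $\log(1/\epsilon_k)$ only from above; the maximum over $k$ may be attained at some $k<n$ and exceed $\log(1/\epsilon_n)$. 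The bound you actually get, and actually need, is $\log(1/\alpha_n)\le Cn/\log n+O(1)$.)

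The second gap is a step that fails as claimed: your ``short computation'' at the end does not produce the theorem's constant. With $\log(1/\alpha_n)\le Cn/\log n+O(1)$ and $\log\log(1/\alpha_n)\le\log n+O(1)$, the exponent in \eqref{rate-dh} is $24Cn(1+o(1))$, while $\log(1/d_H(E_n,I))$ is only guaranteed to be about $n\log 2$ (for summable $\epsilon_j$ it is $n\log 2+O(1)$). Boundedness of \eqref{rate-dh} therefore requires $24C\le\log 2$, i.e.\ $C\le(\log 2)/24\approx 0.029$, which is \emph{strictly stronger} than the stated hypothesis $C<1/(24\log 2)\approx 0.060$. For $C$ between these values and, say, $\epsilon_n=\exp(-Cn/\log n)$, the sequence \eqref{rate-dh} grows like $\exp\bigl((24C-\log 2+o(1))n\bigr)\to\infty$, and this cannot be repaired by sharpening $\alpha_n$, since the optimal uniform-perfectness constant has the same asymptotics; Theorem~\ref{Green-convergence-thm} is then simply not applicable. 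So your assertion that the restriction on $C$ in \eqref{eq:Cantor-condition} ``is precisely what keeps \eqref{rate-dh} bounded'' is false. In fairness, the paper's own final sentence (``\eqref{eq:Cantor-condition} implies \eqref{eq:loglog}'') is equally terse and, as far as I can check, also supports only $C\le(\log 2)/24$ — the constant $1/(24\log 2)$ in the statement appears to be an error for $(\log 2)/24$ — but a proof of the statement as written cannot rest on this computation, and your proposal claims that it does.
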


\begin{proof} Since $K_n$ consists of $2^n$ disjoint segments of equal length, each of them has length at most $2^{-n}$. Therefore, the $2^{-n-1}$ neighborhood of $E_n$ covers $I$. It follows that $d_H(E_n, I)\le 2^{-n-1}$.

We claim that the set $E_n$  is $\alpha_n$-uniformly perfect where $\alpha_n=\frac12 \min_{k\le n} \epsilon_k$. Since $E_1$ is an interval, it suffices to consider $n\ge 2$.  Note that the set $E_k$ is constructed by inserting an interval in the middle of each component of $[0, 1]\setminus E_{k-1}$; the length of this interval is $\epsilon_k \ell$ where $\ell$ is the length of the component. Therefore, the distance from the inserted interval to $E_{k-1}$ is $(1-\epsilon_k)\ell/2$. It follows that every connected component $J$ of the set $E_n$ satisfies 
\begin{equation}\label{distance-component}
 \dist(J, E_n\setminus J) \le \frac{1-\alpha_n}{2 \alpha_n}    \diam J.  
\end{equation}
Suppose that $a\in E_n$, $0<r\le \diam E$, and the annulus $\{z\colon \alpha r\le |z-a|\le r\}$ is disjoint from $E$. Let $k$ be the smallest index such that $E_k \cap B(a, \alpha r)$ is nonempty. If $k=1$, then $B(a, \alpha r)$ contains $[(1-\epsilon_1)/2, (1+\epsilon_1)/2]$, hence $\alpha r\ge \epsilon_1/2$. And since $r\le \diam E_n \le 1$, it follows that $\alpha \ge \epsilon_1 /2\ge \alpha_n$ as claimed. 

Suppose $k\ge 2$. If $B(a, \alpha r)$ contained more than one component of $E_k$, then it would also contain a component of $E_{k-1}$ situated between those, contrary to the choice of $k$. Thus, the set $J = E_k\cap B(a, \alpha r)$ is connected.  Since $\diam J \le 2\alpha_n r$, the estimate   ~\eqref{distance-component} implies   
\[
(1-\alpha) r \le 
 \dist(J, E_k\setminus B(a, \alpha r)) 
 = \dist(J, E_k\setminus J) \le (1-\alpha_n)r, 
\]
hence $\alpha \ge \alpha_n$. This completes the proof that $E_n$ is $\alpha_n$-perfect.  

To justify the application of Theorem~\ref{Green-convergence-thm}, we use Remark~\ref{rem:loglog}. Indeed, in the inequality \eqref{eq:loglog} we have $\log b_n = \log(1/d_H(E_n, I)) \ge (n+1)\log 2$, which in view of ~\eqref{eq:Cantor-condition} implies that ~\eqref{eq:loglog} holds. Thus, $\cp(E_n)\to \cp(I)$. \end{proof}

\bibliographystyle{amsplain}
\bibliography{amsrefs.bib}

\end{document}